\newtheorem{theorem}{Theorem}[section]
\newtheorem{assumption}{Assumption}[section]
\definecolor{darkgreen}{rgb}{0, 0.5, 0}
\definecolor{darkred}{rgb}{0.5, 0, 0}
\title{\LARGE \bf
A simulation-based approach for solving optimisation problems with ODE-type steady state constraints \thanks{The authors acknowledge financial support from the Postdoctoral Fellowship Program (PFP) of the Helmholtz Zentrum M\"unchen and the German Federal Ministry of Education and Research (BMBF, Grant no. 01ZX1310B) within the SYS-Stomach project.}
}
\author{Anna Fiedler\thanks{Institute of Computational Biology, Helmholtz Zentrum M\"unchen, Ingolst\"adter Landstra{\ss}e 1, 85764 Neuherberg, Germany; and Chair of Mathematical Modeling of Biological Systems, Center for Mathematics, Technische Universit\"at M\"unchen, Boltzmannstra{\ss}e 3, 85748~Garching, Germany.
        {\tt\small anna.fiedler@tum.de, fabian.theis@helmholtz-muenchen.de, jan.hasenauer@helmholtz-muenchen.de}} \and Fabian J. Theis\footnotemark[2] \and Jan Hasenauer\footnotemark[2] %
}
\date{}
\begin{document}

\maketitle
\thispagestyle{empty}
\pagestyle{empty}

\begin{abstract}

Ordinary differential equations (ODEs) are widely used to model biological, (bio-)chemical and technical processes. The parameters of these ODEs are often estimated from experimental data using ODE-constrained optimisation. This article proposes a simple simulation-based approach for solving optimisation problems with steady state constraints relying on an ODE. This simulation-based optimisation method is tailored to the problem structure and exploits the local geometry of the steady state manifold and its stability properties. A parameterisation of the steady state manifold is not required. We prove local convergence of the method for locally strictly convex objective functions. Efficiency and reliability of the proposed method are demonstrated in two examples. The proposed method demonstrated better convergence properties than existing general purpose methods and a significantly higher number of converged starts per time.

\end{abstract}

\section{Introduction}

Ordinary differential equation models are widely used in engineering, physics and life sciences. As the parameters of the underlying processes are often unknown, they have to be determined from experimental data~\cite{TarantolA2005}. This requires efficient and robust parameter optimisation methods. Unfortunately, for a wide range of models these efficiency and robustness requirements are not met by available methods (see discussion~\cite{RaueSch2013}). In particular parameter estimation and optimisation problems with nonlinear equality constraints, e.g., steady state constraints, are intricate.

In the following we will consider ODE models
\begin{equation}
\frac{dx}{dt} = f(\theta,x), \quad x(0) = x_0
\label{eq: model}
\end{equation}
with state variables $x(t) \in \mathbb{R}^{n_x}$ and parameters $\theta \in \mathbb{R}^{n_\theta}$. The vector field $f:\mathbb{R}^{n_\theta} \times \mathbb{R}^{n_x}  \to \mathbb{R}^{n_x}$ is assumed to be Lipschitz continuous to ensure existence and uniqueness of the solution. The parameter dependent steady states $x_s(\theta)$ of such systems are constrained by
\begin{equation}
0 = f(\theta,x_s).
\label{eq: ss}
\end{equation}
This defines a nonlinear manifold. As many systems operate in steady state and/or undergo fast equilibration, constrained optimisation problems
\begin{equation}
\begin{aligned}
\min_{x_s,\theta} &\; J(\theta,x_s)\\
\mathrm{s.t.} \, &\;  f(\theta,x_s) = 0,
\end{aligned}
\label{eq: con opt prob}
\end{equation}
with objective function $J:\mathbb{R}^{n_\theta} \times \mathbb{R}^{n_x}\to\mathbb{R}$, are widely studied. Such constrained optimisation problems arise, e.g., in systems biology when model parameters are estimated from steady state data and in engineering when the steady state of a process is designed.

A variety of methods has been developed for optimisation under nonlinear equality constraints~\eqref{eq: con opt prob}. In particular local and manifold optimisation are used in practice~\cite{AbsilMah2007,Bertsekas1999}. Standard local optimisers have to move on the nonlinear manifold defined by the steady state constraint (Figure~\ref{fig: concept figure}). These methods become quickly inefficient and/or have convergence problems~\cite{Bertsekas1999}. Improvements are achieved by reformulating the constrained to an unconstrained optimisation problem using Lagrangian multipliers. However, this increases the problem size. Algorithms for manifold optimisation generally circumvent an increase of the problem size and moves on the manifold. Instead retraction operators are employed~\cite{AbsilMah2007}, projecting a point onto the manifold. Retraction operators which can be efficiently evaluated are unfortunately only available for a few classes of manifolds. For vector fields $f$ which are non-linear, there are generally no analytical functions enabling the projections, limiting the application of established algorithms for optimisation on manifolds.

In the following, we will propose a simulation-based framework for numerical optimisation with steady state constraints. Using ideas from continuous analogues of optimisers~\cite{DuerrEbe2011,Tanabe1979,Tanabe1985} and systems theory~\cite{KhalilBook2002}, we will introduce an ODE system exploiting the local geometry of the steady state manifold to move on the manifold. The manifold will be stabilised using a continuous retraction, available because we are interested in stable steady states of the model~\eqref{eq: model}. The simulation of the resulting ODE system with sophisticated adaptive step-size solvers yields an improved optimisation performance. We provide a proof for exponential stability of a locally optimal point for a locally strictly convex objective function $J(\theta,x)$ and a locally exponentially stable system \eqref{eq: model}. To evaluate the developed optimisation method, two application examples from the field of systems biology are studied. 

\begin{figure}[t]
\begin{centering}
\includegraphics[]{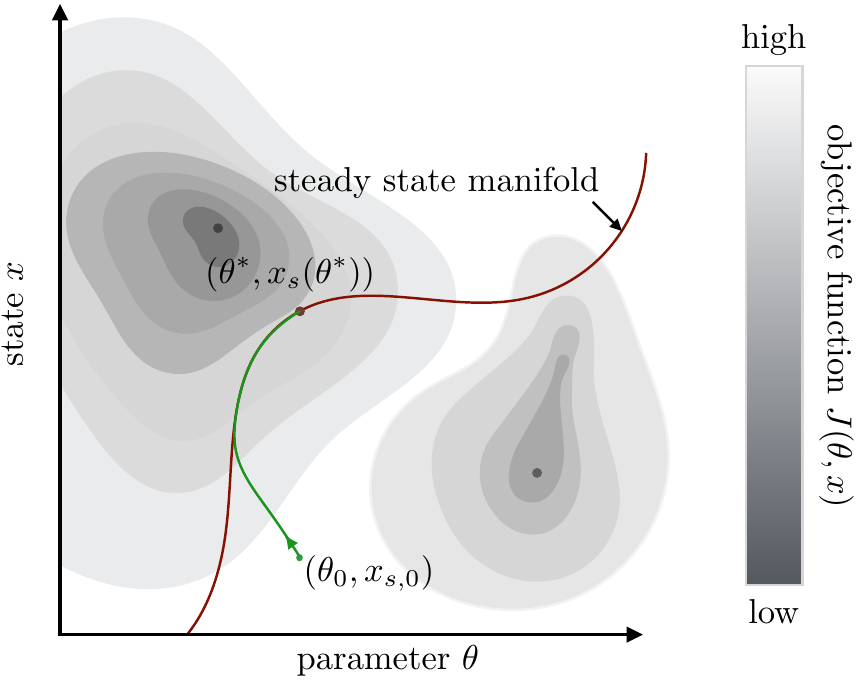}
\caption{Schematic illustration of steady state constrained optimisation problem. The path of a continuous analogue of a constrained optimiser ({\color{darkgreen}\bf---}) from a starting point $(\theta_0,x_{s,0})$ to a local minimum $(\theta^*,x_s(\theta^*))$ on the steady state manifold ({\color{darkred}\bf---}) is depicted.}
\label{fig: concept figure}
\end{centering}
\end{figure}

\section{Geometry of the steady state manifold}
To develop a tailored method for solving optimisation problems with steady state constraints, we will exploit the first order geometry of the manifold of steady states. To this end we consider the sensitivities of the states $x$ with respect to the parameters $\theta$, 
\begin{equation*}
s_i := \frac{\partial x}{\partial \theta_i} = \left(\frac{\partial x_1}{\partial \theta_i}, \ldots, \frac{\partial x_{n_x}}{\partial \theta_i}\right)^T,
\end{equation*}
whose dynamics are governed by the forward sensitivity equation
\begin{equation*}
\dot{s}_i = \nabla_x f s_i + \nabla_{\theta_i} f, \quad s_i(0) =  \nabla_{\theta_i} x_0.
\end{equation*}
In matrix form, we obtain
\begin{equation}
\dot{S} = \nabla_x f S+ \nabla_{\theta} f, \quad S(0) =  \nabla_{\theta} x_0,
\label{eq: sens}
\end{equation}
with $S(t) = \begin{pmatrix} s_1(t),s_2(t),\ldots s_{n_\theta}(t)\end{pmatrix} \in \mathbb{R}^{n_x \times n_\theta}$.

In a steady state $x_s(\theta)$ corresponding to some parameter $\theta$, the forward sensitivity equation~\eqref{eq: sens} simplifies to
\begin{equation}
0 = \nabla_x f S + \nabla_{\theta} f
\end{equation}
evaluated at $(\theta,x_s(\theta))$. If the considered steady state $x_s(\theta)$ is locally exponentially stable, the Jacobian $\nabla_x f|_{(\theta,x_s(\theta))}$ is negative definite and invertible. Accordingly,
\begin{equation}
S = -(\nabla_x f)^{-1} \nabla_{\theta} f.
\end{equation}
Furthermore, this implies that there is a local parametrization of the steady state, $x_s(\theta)$ such that $(\theta,x_s(\theta))$ is an isolated root of \eqref{eq: ss}. The sensitivity of the steady state with respect to the parameters, $S$, can be used in the Taylor series of $x_s(\theta)$,
\begin{equation}
x_s(\theta + r \Delta \theta) = x_s(\theta) + S r \Delta \theta + O(r^2),
\label{eq: taylor with sens}
\end{equation}
with $S = S(\theta,x_s(\theta))$. The perturbation direction and the step size are denoted by $\Delta \theta$ and $r$, respectively. The Taylor expansion~\eqref{eq: taylor with sens} reveals how the steady state changes locally with the parameters. By reformulating~\eqref{eq: taylor with sens} and letting $r \rightarrow 0$, we obtain a dynamical system which evolves on $x_s(\theta)$,
\begin{equation}
\frac{dx_s}{dr} = S \Delta \theta.
\label{eq: ODE based on sens}
\end{equation}
Given an update direction $\Delta \theta$ and a length $r$,~\eqref{eq: ODE based on sens} provides the steady state $x_s(\theta + r \Delta \theta)$ up to the accuracy of the used ODE solver. Hence,~\eqref{eq: ODE based on sens} enables moves on the steady state manifold, similar to results in~\cite{AbsilMah2007}. A recalculation of the steady state for changed parameter values or a retraction operator are not longer required.

\section{Optimisation with steady state constraints}

In this section, we exploit the insight into the local geometry of the steady state manifold to derive an ODE model converging to a locally optimal solution of the constrained optimisation problem~\eqref{eq: con opt prob}.

\subsection{Simulation-based optimisation method.} \label{sec: method}
In local optimisation, the parameters $\theta$ are updated using the gradient of the objective function. The method of gradient descent, for instance, exploits an update
\begin{equation}
\theta_{k+1} = \theta_k - r \frac{dJ}{d\theta}^T,
\label{eq: update for theta}
\end{equation}
with step length $r$. The continuous analogue of gradient decent methods is $d\theta/dr = - (dJ/d\theta)^T$~\cite{Tanabe1985}. This ODE system can be coupled with the dynamical system evolving on the steady state manifold~\eqref{eq: ODE based on sens}, using $\Delta \theta = - (dJ/d\theta)^T$. We obtain the ODE system
\begin{equation}
\begin{aligned}
\frac{d\theta}{dr} &= - \left.\frac{dJ}{d\theta}\right|_{(\theta,x_s)}^T \\
\frac{dx_s}{dr} &= S \frac{d\theta}{dr} = - S \left.\frac{dJ}{d\theta}\right|_{(\theta,x_s)}^T,
\end{aligned}
\label{eq: ODE-based optimiser; Euler without retraction}
\end{equation}
with the steady state sensitivity $S$. Initialisation of~\eqref{eq: ODE-based optimiser; Euler without retraction} in a point $(\theta_0,x_{s,0})$ fulfilling~\eqref{eq: ss} yields a trajectory evolving on the steady state manifold, along which the objective function decreases.

The formulation~\eqref{eq: ODE-based optimiser; Euler without retraction} bears two disadvantages: (i)~An appropriate initial point $(\theta_0,x_{s,0})$ has to be determined by solving~\eqref{eq: ss}; and (ii)~numerical errors can result in a divergence from the steady state manifold. To address these problems, we introduce the term $\lambda f(\theta,x_s)$ which locally retracts the state of the system to the manifold by exploiting the stability properties of the steady state. This yields the system
\begin{equation}
\begin{aligned}
\frac{d\theta}{dr} &= - \left.\frac{dJ}{d\theta}\right|_{(\theta,x_s)}^T \\
\frac{dx_s}{dr} &= - \hat{S} \left.\frac{dJ}{d\theta}\right|_{(\theta,x_s)}^T + \lambda f(\theta,x_s).
\label{eq: ODE-based optimiser; Euler}
\end{aligned}
\end{equation}
For this modified system we do not require that the initial point $(\theta_0,x_{s,0})$ fulfils the steady state conditions~\eqref{eq: ss}, hence, the Jacobian $\nabla_x f|_{(\theta,x_s)}$ might not be invertible. To address this, we define
\begin{equation} 
\hat{S}(\theta,x_s) :=  -(\nabla_x f|_{(\theta,x_s)})^+ \nabla_{\theta} f|_{(\theta,x_s)},
\end{equation}
in which $(\nabla_x f|_{(\theta,x_s)})^+$ denotes the Moore--Penrose pseudoinverse of $\nabla_x f|_{(\theta,x_s)}$. On the steady state manifold, the Jacobian is invertible and we recover the standard steady state sensitivity. For large retraction factor $\lambda \gg 0$, the state $(\theta,x_s)$ is retracted quickly to the steady state manifold $(\theta,x_s(\theta))$.

\subsection{Analysis of simulation-based optimisation method.} \label{sec: stability}
In the following, we assess the local stability and convergence of~\eqref{eq: ODE-based optimiser; Euler} to a local optimum $(\theta^*,x_s(\theta^*))$ under four conditions.
\begin{assumption} \label{assumption: functional relationship}
There exists an locally isolated root $x_s(\theta)$ of the equation $f(\theta,x_s(\theta)) = 0$ for all $(\theta - \theta^*) \in B_{c} = \{\tilde \theta \in \mathbb{R}^{n_{\theta}}| \Vert \tilde \theta \Vert \leq c\} \subseteq \mathbb{R}^{n_\theta}$.
\end{assumption}
This root is a steady state of the system~\eqref{eq: model}.
\begin{assumption} \label{assumption: exponential stability}
The steady state $x_s(\theta)$ is locally exponentially stable uniformly in $\theta$, with $(\theta - \theta^*) \in B_{c}$.
\end{assumption}
Accordingly, $\exists \zeta_0, \gamma,k>0$ such that $\Vert x(t) -x_s(\theta)\Vert \leq k\Vert x(0)-x_s(\theta)\Vert \exp(-\gamma t)$, $\forall (x(0)-x_s(\theta)) \in B_{\zeta_0} \vcentcolon= \{\tilde \Delta_s(0)  \in \mathbb{R}^{n_x} |\Vert \tilde \Delta_s(0) \Vert < \zeta_0\}$ and $\forall (\theta-\theta^*) \in B_c$. This implies that $\exists \zeta \geq \zeta_0$ such that $(x(t)-x_s(\theta))\in B_{\zeta} \vcentcolon= \{\tilde \Delta_s(t)  \in \mathbb{R}^{n_x}|\Vert \tilde \Delta_s(t) \Vert < \zeta\}$~\cite{KhalilBook2002}.
\begin{assumption} \label{assumption: identifiability}
There exists a neighbourhood around $\theta^*$, $(\theta - \theta^*) \in B_{c}$, in which the objective function evaluated on the steady state manifold $x_s(\theta)$, $J(\theta,x_s(\theta))$, is locally strictly convex in $\theta$.
\end{assumption}
This implies that the parameters $\theta$ are locally structurally identifiable and that solutions to $d\theta/dr = -dJ/d\theta|_{(\theta,x_s(\theta))}$ converge to the optimal point $\theta^*$ for initial points $(\theta(0) - \theta^*) \in B_c$.
\begin{assumption} \label{assumption: smoothness}
The functions $(dJ/d\theta)|_{(\theta,x_s)}$, $f(\theta,x_s)$ and $x_s(\theta)$ and their partial derivatives up to order 2 are bounded for $(\theta - \theta^*) \in B_c$ and $(x_s - x_s(\theta)) \in B_\zeta$.
\end{assumption}
Using these assumptions we find:
 
\begin{theorem} \label{theorem: local convergence}
Let Assumptions~\ref{assumption: functional relationship} -  \ref{assumption: smoothness} be satisfied. Then there exists a $\lambda^*$ such that for all $\lambda > \lambda^*$ a local minimum $(\theta^*,x_s^*)$ of the optimisation problem~\eqref{eq: con opt prob} is a locally exponentially stable steady state of the system~\eqref{eq: ODE-based optimiser; Euler}.
\end{theorem}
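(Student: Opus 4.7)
The plan is to prove local exponential stability through a Lyapunov function tailored to the geometry of the problem: one part that decays along the tangential gradient flow on the steady-state manifold, and a quadratic part in the transverse direction that decays because the term $\lambda f$ retracts trajectories back to the manifold. First I would change coordinates from $(\theta,x_s)$ to $(\theta,y)$ with $y:=x_s-x_s(\theta)$, where $x_s(\cdot)$ is the smooth local parametrisation guaranteed by Assumption~\ref{assumption: functional relationship}. In these coordinates the manifold is $\{y=0\}$, the equilibrium sits at $(\theta^*,0)$, and the dynamics read
\begin{equation*}
\dot\theta = -g(\theta,y), \qquad \dot y = \bigl(S(\theta)-\hat S(\theta,x_s(\theta)+y)\bigr)\,g(\theta,y) + \lambda\, f(\theta,x_s(\theta)+y),
\end{equation*}
where $g(\theta,y):=(dJ/d\theta)^T|_{(\theta,x_s(\theta)+y)}$. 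Since $\nabla_x f$ is invertible in a neighbourhood of $(\theta^*,x_s^*)$ by Assumption~\ref{assumption: exponential stability}, the pseudoinverse agrees with the inverse there, so $\hat S$ is smooth and $\hat S(\theta,x_s(\theta))=S(\theta)$; consequently $g(\theta,0)=\nabla\tilde J(\theta)^T$ for the restricted objective $\tilde J(\theta):=J(\theta,x_s(\theta))$, and $g(\theta^*,0)=0$ by Assumption~\ref{assumption: identifiability}.

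I would take as Lyapunov candidate
\begin{equation*}
V(\theta,y) = \bigl(\tilde J(\theta)-\tilde J(\theta^*)\bigr) + \tfrac{1}{2}\, y^T P y,
\end{equation*}
with $P\succ 0$ obtained from the converse Lyapunov theorem applied to the Hurwitz matrix $A^*:=\nabla_x f|_{(\theta^*,x_s^*)}$ (Hurwitzness follows from Assumption~\ref{assumption: exponential stability}, since exponential stability of the steady state forces the linearisation to have eigenvalues with strictly negative real part): choose $P$ solving $A^{*T}P+PA^*=-Q$ for some $Q\succ 0$. By Assumption~\ref{assumption: identifiability} $\tilde J$ is locally strongly convex at $\theta^*$, and Assumption~\ref{assumption: smoothness} supplies the needed smoothness, so $V$ is sandwiched between positive-definite quadratic forms in $(\theta-\theta^*,y)$ on a neighbourhood of the origin.

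Differentiating $V$ along trajectories and Taylor-expanding around $(\theta^*,0)$ using $g(\theta,y)=\nabla\tilde J(\theta)^T+O(\|y\|)$, $f(\theta,x_s(\theta)+y)=A(\theta)y+O(\|y\|^2)$ and $S(\theta)-\hat S(\theta,x_s(\theta)+y)=O(\|y\|)$ gives, schematically,
\begin{equation*}
\dot V \;\le\; -\|\nabla\tilde J(\theta)\|^2 \;-\; \tfrac{\lambda}{2}\, y^T Q y \;+\; c_1\|\nabla\tilde J(\theta)\|\,\|y\| \;+\; c_2\|y\|^2 \;+\; (\text{higher-order terms}),
\end{equation*}
with constants $c_1,c_2$ depending only on the bounds from Assumption~\ref{assumption: smoothness}. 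Splitting the cross term with Young's inequality absorbs half of $\|\nabla\tilde J\|^2$, and the remaining $\|y\|^2$ contributions are dominated by $-\tfrac{\lambda}{2}\lambda_{\min}(Q)\|y\|^2$ provided $\lambda$ is chosen large enough; this fixes the threshold $\lambda^*$. Combining with local strong convexity $\|\nabla\tilde J(\theta)\|^2\ge m\|\theta-\theta^*\|^2$ then yields $\dot V\le -\alpha(\|\theta-\theta^*\|^2+\|y\|^2)$ on a neighbourhood, from which local exponential stability of $(\theta^*,x_s^*)$ follows by the standard Lyapunov argument and reversal of the coordinate change.

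The main obstacle I foresee is the cross-coupling between the slow tangential descent and the fast transverse retraction: the product $(S-\hat S)g$ couples them quadratically and the matrix $\nabla_y g(\theta^*,0)$ introduces a linear cross term in $\dot V$ that must be dominated by the retraction piece. This is precisely why the threshold $\lambda^*$ appears, and why Assumption~\ref{assumption: smoothness} is essential for bounding the higher-order remainders uniformly on a neighbourhood. A conceptually cleaner but technically equivalent route would be to treat $\epsilon:=1/\lambda$ as a singular-perturbation parameter and invoke a Tikhonov-type theorem on the resulting slow-fast decomposition, which would make the emergence of $\lambda^*$ even more transparent.
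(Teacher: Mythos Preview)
Your argument is correct, but it takes a different route from the paper's. The paper proceeds exactly along the alternative you sketch in your final sentence: it sets $\varepsilon=1/\lambda$, rewrites~\eqref{eq: ODE-based optimiser; Euler} as a singularly perturbed system in standard form, and then verifies the five hypotheses of the off-the-shelf singular-perturbation stability result (Theorem~9.3 in Khalil~\cite{KhalilBook2002}). That theorem directly supplies an $\varepsilon^*$ and hence $\lambda^*=1/\varepsilon^*$. No Lyapunov function is ever written down; all the work is delegated to the cited black box.

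Your composite-Lyapunov construction is essentially what underlies the proof of that theorem, so the two arguments are equivalent at a deeper level. Your version is more self-contained and makes the mechanism---retraction in the $y$-direction dominates the cross-coupling once $\lambda$ is large---completely explicit, at the cost of carrying out the Young-inequality bookkeeping by hand. The paper's version is shorter and cleaner but opaque: the reader must consult Khalil to see where $\lambda^*$ actually comes from. One small point worth tightening in your write-up: you choose $P$ for $A^*=\nabla_x f|_{(\theta^*,x_s^*)}$, but the linearisation of $f$ along the manifold gives $A(\theta)=\nabla_x f|_{(\theta,x_s(\theta))}$, which varies with $\theta$. You should state explicitly that by continuity (Assumption~\ref{assumption: smoothness}) one has $A(\theta)^TP+PA(\theta)\preceq -\tfrac12 Q$ on a small enough neighbourhood of $\theta^*$, so the same $P$ works throughout.
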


\begin{proof}
To prove Theorem~\ref{theorem: local convergence} we use perturbation theory \cite{KhalilBook2002}. We define $\varepsilon = \lambda^{-1}$ and shift the optimum to the origin using the linear state transformation,
\begin{align*}
\tilde \theta = \theta - \theta^* \quad \text{and} \quad \tilde x_s = x_s - x_s(\theta^*).
\end{align*}
This yields the singular perturbed system
\begin{equation}
\begin{aligned}
\frac {d\tilde \theta}{dr} & = -\frac{dJ}{d\tilde \theta}^T =\vcentcolon \tilde F(\tilde \theta,\tilde x_s,\varepsilon)\\
\varepsilon \frac {d\tilde x_s}{dr} & = - \varepsilon \hat{S} \frac{dJ}{d\tilde \theta}^T+ f =\vcentcolon \tilde G(\tilde \theta,\tilde x_s,\varepsilon),
\end{aligned}
\label{eq: transformed opt system}
\end{equation}
with $dJ/d{\tilde \theta}$, $f$ and $\hat{S}$ evaluated at $(\tilde \theta+\theta^*,\tilde x_s+x_s(\theta^*))$. The system~\eqref{eq: transformed opt system} captures the dynamics of the deviances from the optimal parameter, $\tilde \theta$, and the deviances from the steady state for the optimal parameter, $\tilde x_s$. Furthermore, it possesses the following properties:
\begin{itemize}
\item[(i)] $\tilde F(0,0,\varepsilon) = 0$ and $\tilde G(0,0,\varepsilon) = 0$ as the objective function gradient vanishes, $0 = (dJ/d\tilde \theta)|_{(\theta^*,x_s(\theta^*))}^T$ and as the steady state condition is fulfilled, $0 = f(\theta^*,x_s(\theta^*))$. Both follows from optimality of $\theta^*$ and Assumption~\ref{assumption: functional relationship}.
\item[(ii)] The equation $0 = \tilde G(\tilde \theta,\tilde x_s,0)$ has the isolated root $\tilde x_s(\tilde \theta) = x_s(\tilde \theta + \theta^*) - x_s(\theta^*)$ (Assumption~\ref{assumption: functional relationship}) with $\tilde x_s(0) = 0$.
\item[(iii)] The functions $\tilde F$, $\tilde G$, and $x_s$ and their partial derivatives up to order~2 are bounded for $(\tilde{x}_s - x_s(\tilde \theta)) \in B_{\zeta}$ (Assumption~\ref{assumption: smoothness}).
\item[(iv)] The origin of the reduced systems 
\begin{equation*}
\frac {d\tilde \theta}{dr} = F(\tilde \theta,\tilde x_s(\tilde \theta),\varepsilon) = -\left.\frac{dJ}{d\tilde \theta}\right|_{(\tilde \theta + \theta^*,x_s(\tilde \theta) + x_s(\theta^*))}^T
\end{equation*}
obtained for $\varepsilon = 0$ is locally exponentially stable, with $x_s(\tilde \theta) + x_s(\theta^*) = x_s(\tilde \theta + \theta^*)$ from (ii). This follows from strict local convexity of the objective at $\theta^*$ (Assumption~\ref{assumption: identifiability}).
\item[(v)] The boundary-layer system is derived from~\eqref{eq: transformed opt system} using the transformation $\tilde \Delta_s = \tilde x_s - \tilde x_s(\tilde \theta) (= \tilde x_s - \tilde x_s(\tilde \theta))$, yielding
\begin{align*}
\frac {d\tilde \theta}{dr} & = -\frac{dJ}{d\tilde \theta}^T\\
\varepsilon \frac{d\tilde \Delta_s}{dr} & = \varepsilon (S - \hat{S}) \frac{dJ}{d\tilde \theta}^T+ f,
\end{align*}
with $dJ/d{\tilde \theta}$, $f$ and $\hat{S}$ evaluated at $(\tilde \theta+\theta^*,\tilde \Delta_s + x_s(\tilde \theta + \theta^*))$ and $S$ evaluated at $(\tilde \theta+\theta^*,x_s(\tilde \theta + \theta^*))$. After rescaling of the simulation time, $\rho = r/\varepsilon$, and setting $\varepsilon = 0$, we obtain the boundary-layer system
\begin{align*}
\frac{d\tilde \Delta_s}{d\rho} & = f(\tilde \theta+\theta^*,\tilde \Delta_s + x_s(\tilde \theta + \theta^*)).
\end{align*}
The origin of this boundary-layer system is locally exponentially stable, uniformly in $\tilde \theta$ as the steady state $x_s(\theta)$ of~\eqref{eq: model} is exponentially stable uniformly in $\theta = \tilde \theta+\theta^*$ (Assumption~\ref{assumption: exponential stability}).
\end{itemize}
The properties (i)-(v) are the prerequisites of~\cite[Theorem 9.3]{KhalilBook2002}, establishing the existence of $\varepsilon^* > 0$ such that for all $\varepsilon < \varepsilon^*$ systems of type~\eqref{eq: transformed opt system} are locally exponentially stable. As stability properties are conserved under the performed transformations, we obtain for $\varepsilon = \lambda^{-1}$ the Theorem~\ref{theorem: local convergence}. \hfill
\end{proof}

Theorem~\eqref{theorem: local convergence} establishes local exponential stability of local optima $(\theta^*,x_s(\theta^*))$ for appropriate choice of $\lambda$, assuming a convex objective function \eqref{eq: con opt prob} and a well-behaved system \eqref{eq: model}. From this follows also local convergence. Stability and convergence are not affected by the approximation of the steady state sensitivity via $\hat{S}$.

\subsection{Analysis of multiple steady-state constraints.}

In practice often steady state data for different inputs $u_i$ are available, yielding constraints $f(\theta,x_s^i,u_i) = 0$ for $i = 1, \ldots, m$. The simulation-based method and the stability proof presented in Sections~\ref{sec: method} and~\ref{sec: stability} can be generalised to this case. We simply concatenate constraints and state vectors, $\bar{f}(\theta,x) = (f(\theta,x_s^1,u_1),\ldots,f(\theta,x_s^m,u_m))$ with $x = (x_s^1,\ldots,x_s^m)$, and apply the method to the concatenated system. Due to the block structure some calculations, e.g., matrix inversion, can be simplified.
\\[3ex]
In the following, we illustrate the behaviour of the simulation-based optimisation method. Furthermore, the performance of the proposed approach will be compared to standard constrained and unconstrained optimisation methods. For this purpose, we consider two simulation examples for which the ground truth is known.

\section{Example~1: Conversion reaction}

In this section, we study parameter estimation for conversion reactions from steady state data. Conversion reactions are among the most common motifs in biological systems, therefore particular interesting, and provide a simple test case.

\subsection{Pathway model.}

We consider the conversion reaction 
\begin{equation*}
A \underset{\theta_2}{\overset{\theta_1}{\rightleftarrows}} B,
\end{equation*}
with parameters $\theta = \begin{pmatrix} \theta_1, \theta_2 \end{pmatrix} \in \mathbb{R}_+^2$. It is well-known that the concentration of the biochemical species $A$ is governed by
\begin{equation}
\begin{aligned}
\frac{dx}{dt} &= \theta_2 \xi - (\theta_1 + \theta_2) x, \quad x(0) = x_0
\end{aligned}
\label{eq: CR model}
\end{equation}
with $x_0 \in \mathbb{R}_+$ denoting the initial concentration of $A$ and $\xi \in \mathbb{R}_+$ denoting the sum of the initial concentrations of $A$ and $B$. The steady state of model~\eqref{eq: CR model} is
\begin{equation}
x_s(\theta) = \frac{\theta_2 \xi}{\theta_1 + \theta_2}.
\label{eq: CR steady state}
\end{equation}

\subsection{Parameter estimation problem.}
To illustrate the properties of the simulation-based method, we consider a simple parameter estimation problem. We assume that for $\xi = 1$, the steady state $\bar{x}_s = 0.2$ is observed. Furthermore, for the parameters $\theta_i$, $i = 1,2$, prior knowledge suggests the value $\bar{\theta}_1 = 3.9$ and $\bar{\theta}_2 = 1.5$. Accordingly, we estimate the parameters using the weighted least squares optimisation problem
\begin{equation}
\begin{aligned}
\min_{\theta,x_s} &\, J(\theta,x_s) := \tfrac{1}{2}10 \left(x_s - \bar{x}_s\right)^2 + \tfrac{1}{2} \sum_{i=1}^2 \left(\theta_i - \bar{\theta}_{i}\right)^2\\
\mathrm{s.t.} \, &\, 0 = \theta_2 \xi - (\theta_1 + \theta_2)  x_s.
\end{aligned}
\label{eq: CR opt prob}
\end{equation}
 
\subsection{Formulation of system for simulation-based optimisation.}
To formulate the ODE model solving~\eqref{eq: CR opt prob}, the gradient of the objective function with respect to the parameters
\begin{equation*}
\begin{aligned}
\frac{dJ}{d\theta_1} &= 10(x_s - \bar{x}_s) s_1 + (\theta_1 - \bar{\theta}_{1}) \\
\frac{dJ}{d\theta_2} &= 10(x_s - \bar{x}_s) s_2 + (\theta_2 - \bar{\theta}_{2})
\end{aligned}
\end{equation*}
and the local sensitivities 
\begin{equation*}
S(\theta,x_s) = \begin{pmatrix} s_1, s_2 \end{pmatrix} = \begin{pmatrix} \dfrac{- x_s}{\theta_1 + \theta_2}, \dfrac{\xi - x_s}{\theta_1 + \theta_2} \end{pmatrix} 
\end{equation*}
are derived. These components are substituted into~\eqref{eq: ODE-based optimiser; Euler}, yielding the system
\begin{equation}
\begin{aligned}
\frac{d\theta_1}{dr} &= 10(\bar{x}_s - x_s) s_1 + (\bar{\theta}_1 - \theta_1) \\
\frac{d\theta_2}{dr} &= 10(\bar{x}_s - x_s) s_2 + (\bar{\theta}_2 - \theta_2) \\
\frac{dx_s}{dr} &= \sum_{i=1}^2 10(\bar{x}_s - x_s) s_i^2 + \sum_{i=1}^2 (\bar{\theta}_i - \theta_i) s_i \\
&\hspace{13mm} + \lambda (\theta_2 \xi - (\theta_1^2 + \theta_2) x_s),
\end{aligned}
\label{eq: CR opt sys}
\end{equation}
with initial conditions $\theta_1(0) = \theta_{1,0}$, $\theta_2(0) = \theta_{2,0}$ and $x_s(0) = x_{s,0}$. It can be verified that the objective function $J$ is locally strictly convex in $\theta$ -- the parameters are locally identifiable -- and that the model~\eqref{eq: CR model} is asymptotically stable. Accordingly, system~\eqref{eq: CR opt sys} converges to a local optimum of the constrained optimisation problem~\eqref{eq: CR opt prob} (Theorem~\ref{theorem: local convergence}). As stopping criterion for the simulation-based optimisation we use $\max\{\Vert d\theta/dr \Vert ,\Vert dx_s/dr \Vert\}< TOL$ with $TOL = 10^{-6}$. Additionally, a maximal number of function evaluations is implemented.

\subsection{Numerical results.}
To illustrate the simulation-based optimisation method we simulate system~\eqref{eq: CR opt sys} using the MATLAB ODE solver \texttt{ode15s}  with default settings. Exemplary trajectories are depicted in Figure~\ref{fig: CR Euler}. We find that for retraction factors $\lambda > 0$, the states $(\theta_1,\theta_2,x_s)^T$ converge to the optimal solution. As retraction renders the steady state manifold~\eqref{eq: CR steady state} attractive, also for initial conditions $(\theta_{1,0},\theta_{2,0},x_{s,0})^T$ which do not fulfil the steady state condition, fast convergence to the steady state manifold can be achieved using $\lambda \gg 1$. For large retraction ($\lambda \gg 1$), the dynamic consists of two phases: (Phase~1) the state $x$ converges quickly to the parameter-dependent steady state~\eqref{eq: CR steady state}; and (Phase~2) the state $(\theta_1,\theta_2,x_s)^T$ moves along the steady state manifold to a local optimum.

\begin{figure*}
\centering
\includegraphics[]{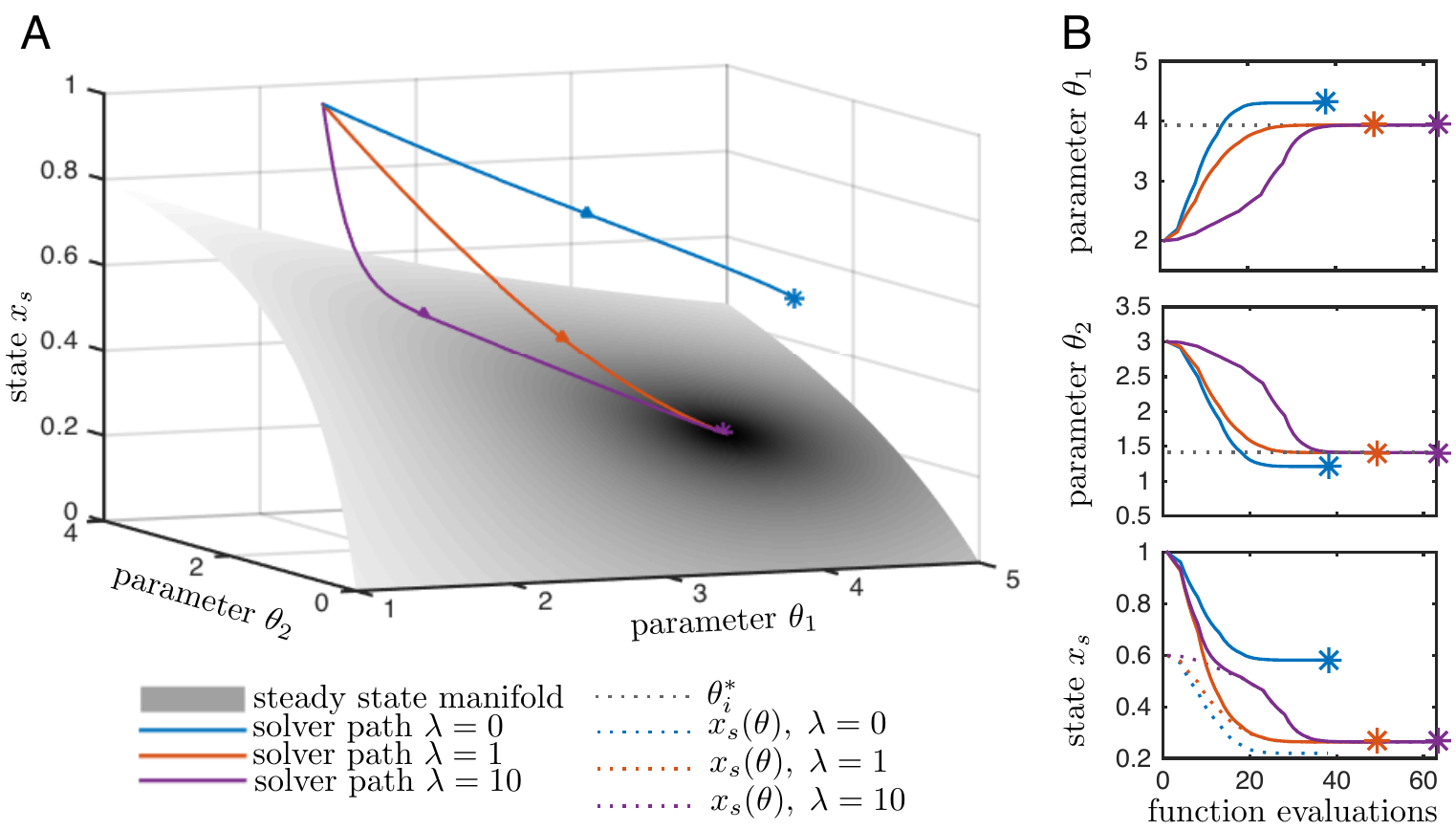}
\caption{Trajectory of the optimiser path for the estimation of the parameters of the conversion reaction model~\eqref{eq: CR model}. (A) The trajectory of the system~\eqref{eq: CR opt sys} for different retraction factors $\lambda$ (full lines), the endpoints (stars), and the steady state manifold (surface, \eqref{eq: CR steady state}) are shown. The objective function values are indicated by the surface coloring. (B) The trajectory of the individual components of system~\eqref{eq: CR opt sys} (full lines), their endpoints (stars), and optimal parameter value respective true steady state~\eqref{eq: CR steady state} for the parameters (dotted lines) are depicted.}
\label{fig: CR Euler}
\end{figure*}

\section{Example~2: NGF-induced Erk activation}

To challenge the proposed simulation-based optimisation method, we consider in the following a larger and more realistic problem: Fitting of dose response data for NGF-induced Erk activation.

\subsection{Pathway model.}
NGF is known to induce a strong pain sensitisation during inflammation via the activation of Erk. As this is clinically relevant, a simple model for NGF-induced Erk activation has been developed in~\cite{HasenauerHas2014},
\begin{equation}
\begin{aligned}
\frac{dx_1}{dt} &= 10^{\theta_1} u \left(10^{\theta_5} - x_1\right) - 10^{\theta_2} x_1 \\
\frac{dx_2}{dt} &= \left(x_1 + 10^{\theta_3}\right) \left(10^{\theta_6} - x_2\right) - 10^{\theta_4} x_2
\end{aligned}
\label{eq: NE model}
\end{equation}
in which $x_1$ denotes the activity of the NGF receptor TrkA, $x_2$ denotes the activity of Erk, and $u$ denotes the concentration of NGF. The parameters $\theta \in \mathbb{R}^6$ are logarithms of effective rate parameters. The steady state of model~\eqref{eq: NE model} is
\begin{equation}
\begin{aligned}
x_{s,1}(\theta) &= \frac{10^{\theta_1+\theta_5} u}{10^{\theta_1} u + 10^{\theta_2}}\\
x_{s,2}(\theta) &= \frac{10^{\theta_6} (x_{s,1}(\theta) + 10^{\theta_3})}{x_{s,1}(\theta) + 10^{\theta_3} + 10^{\theta_4}}
\end{aligned}
\label{eq:NEsteadystate}
\end{equation}
For details on the model, we refer to the original publication~\cite{HasenauerHas2014}.

\subsection{Parameter estimation problem.}
The parameters $\theta$ are unknown and have to be estimated from experimental data. In practice, the activity of the NGF receptor TrkA ($x_1$) cannot be assessed and merely data for the Erk activity level ($x_2$) are available. These data can be time resolved, but mostly dose response curves for the steady state are recorded. 

In the following, we assume that for ten different NGF concentrations ($u_i$, $i = 1,\ldots,10$) the stationary Erk activity ($\bar{x}_{s,2}^i$, $i = 1,\ldots,10$) has been measured. Therefore, we evaluate the steady state~\eqref{eq:NEsteadystate} for these different inputs $u_{i}$. Using a standard least squares objective function, we obtain the constrained optimisation problem
\begin{equation}
\begin{aligned}
\min_{\theta,\{x_s^i\}_i} &\, J(\theta,\{x_s^i\}_i) = \tfrac{1}{2} \sum_{i=1}^{10} \left(x_{s,2}^i - \bar{x}_{s,2}^i\right)^2 \\
\mathrm{s.t.} \hspace*{1mm} 
&\, 0 = 10^{\theta_1} u_i \left(10^{\theta_5} - x_{s,1}^i\right) - 10^{\theta_2} x_{s,1}^i \\
&\, 0 = \left(x_{s,1}^i + 10^{\theta_3}\right) \left(10^{\theta_6} - x_{s,2}^i\right) - 10^{\theta_4} x_{s,2}^i \\[1ex]
& \hspace*{30mm} \quad i = 1,\ldots,10.
\end{aligned}
\label{eq: NE opt prob}
\end{equation}
This constrained optimisation problem has $2\cdot10 + 6 = 26$ optimisation variables. As an analytical expression for the steady state~\eqref{eq:NEsteadystate} is available the constrained optimisation problem~\eqref{eq: NE opt prob} can be reduced to an unconstrained optimisation problem by substituting the expression for the parameter dependent steady state~\eqref{eq: NE opt prob} into the objective function $J(\theta,x_s)$, yielding
\begin{equation}
\begin{aligned}
\min_{\theta} &\, \tfrac{1}{2} \sum_{i=1}^{10} \left(\frac{10^{\theta_6} (\frac{10^{\theta_1+\theta_5} u_i}{10^{\theta_1} u_i + 10^{\theta_2}} + 10^{\theta_3})}{\frac{10^{\theta_1+\theta_5} u_i}{10^{\theta_1} u_i + 10^{\theta_2}} + 10^{\theta_3} + 10^{\theta_4}} - \bar{x}_{s,2}^i\right)^2.
\end{aligned}
\label{eq: NE opt prob red}
\end{equation}
This unconstrained optimisation problem has $6$ optimisation variables, and the size is therefore independent of the number of NGF concentrations.

\subsection{Formulation of system for simulation-based optimisation.}
As before, a dynamical system converging to the locally optimal points of the constrained optimisation problem~\eqref{eq: NE opt prob} can be formulated. To this end the objective function gradient, $dJ/d\theta$, and the local sensitivities, $S(x,\theta)$, are derived. This can be done manually, but for high-dimensional problems symbolic math toolboxes should be used. The results are inserted in~\eqref{eq: ODE-based optimiser; Euler} to yield a problem specific system.

\subsection{Numerical results.}
In the following, we compare our simulation-based optimisation method with two state-of-the-art implementations of standard methods:
\begin{itemize}
\item Constrained optimisation~\eqref{eq: NE opt prob} using the \mbox{MATLAB} algorithm \texttt{fmincon} for constrained nonlinear optimisation; and
\item Unconstrained optimisation~\eqref{eq: NE opt prob red} using the \mbox{MATLAB} algorithm \texttt{fminunc} for unconstrained nonlinear optimisation.
\end{itemize}
Constrained optimisation using \texttt{fmincon} (or a similar algorithm) is the n\"aive approach. This approach does neither require an analytical formula for the steady state nor does it exploit the structure of the problem. Unconstrained optimisation~\eqref{eq: NE opt prob red} on the other hand requires an analytical solution of the steady state. Using this analytical solution the problem is simplified significantly. Accordingly, we expect that unconstrained optimisation will provide a very good performance, its applicability is however limited to simple models for which an analytical expression for the steady state is known. For the evaluation, \texttt{fmincon} is supplied with the objective function value and the values of the constraint, as well as the respective analytical derivatives. For \texttt{fminunc} only the objective function and its gradient are implemented. Both algorithms use default settings. The ODE system obtained for the simulation-based method is simulated using the MATLAB solver \texttt{ode15s} with default settings. Artificial data for the comparison are generated by calculating the analytical steady state~\eqref{eq:NEsteadystate} for inputs $u=(0,0.01,0.05,0.1,0.5,1,5,10,50,100)$ and adding normally distributed noise with zero mean and variance of 0.01.

We used constrained optimisation, unconstrained optimisation as well as the proposed simulation-based method to estimate the parameters. For all methods the initial parameter estimates were sampled from a uniform distribution, $\theta_k \sim \mathcal{U}(-3,1)$. In addition to initial parameters, constrained optimisation and simulation-based methods require initial estimates for the steady states. They are also sampled from a uniform distribution, $x_{s,j}^i \sim \mathcal{U}(0,3)$ for $j = 1,2$ and $i = 1,\ldots,10$.


To assess the convergence properties and the computation time, all aforementioned methods were initialised with 100 sampled starting point. The final objective function values as well as the respective computation time is depicted in Figure~\ref{fig: NE comp obj}. We find -- as expected -- that the unconstrained optimisation method is on average computationally most efficient and for almost all runs a same final objective function value is achieved (which is probably the global optimum). The simulation-based optimisation methods with retraction factor $\lambda = 2$ and $\lambda = 20$ possess similar convergences properties (Figure~\ref{fig: NE comp obj}A), the computation time is however roughly 10-times higher. The standard constrained optimisation method implemented by \texttt{fmincon} possesses a significantly worse convergence rate of roughly 28\%. The computation time for \texttt{fmincon} varies over several orders of maginitude and the average is comparable to the computation time for the simulation-based optimisation methods (Figure~\ref{fig: NE comp obj}B). However, the mean computation time per converged start is roughly 7 to 8 times higher for \texttt{fmincon} in comparison to the simulation based method with retraction factor $\lambda = 20$ (\texttt{fmincon}: 7.56 seconds per converged start; simulation based with $\lambda = 20$: 0.98 seconds per converged start).

In general it is difficult to ensure that the objective function is locally strictly convex -- a prerequisite for Theorem~\ref{theorem: local convergence} -- prior to the optimisation. Also for this example, this was not possible, and indeed it is also not the case. The optimisation results reveal that the parameters are practically non-identifiable and that there is a continuum of parameters which yields the same optimal objective function value. As the simulation-based methods worked fine despite the lack of identifiability, it is well suited for practical applications and we expect that Theorem~\ref{theorem: local convergence} can be extended.

\begin{figure}[t]
\begin{centering}
\includegraphics[scale=1]{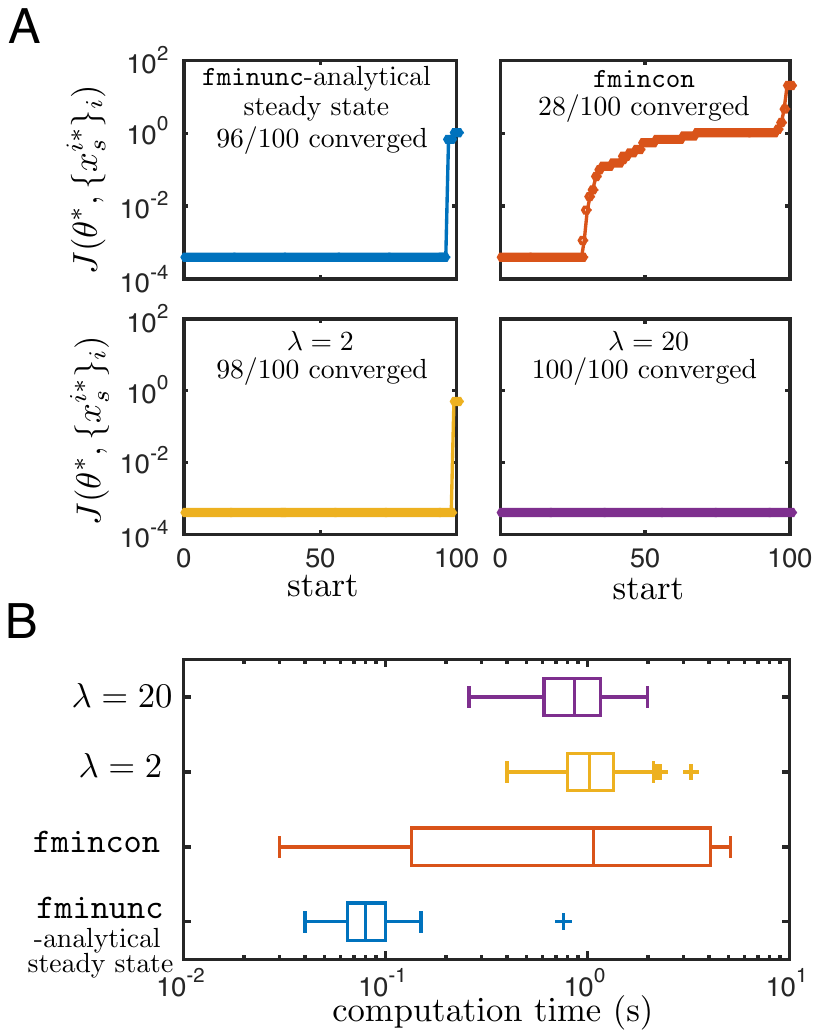}
\caption{Comparison optimisation methods for NGF-induced Erk activation model. (A) Final objective function values and (B) computation time for 100 runs of unconstrained optimisation method (\texttt{fminunc}), constrained optimisation algorithm (\texttt{fmincon}) and the proposed simulation-based optimisation algorithm with $\lambda = 2$ and $\lambda = 20$ are depicted.}
\label{fig: NE comp obj}
\end{centering}
\end{figure}

In summary, for this estimation problem, the simulation-based method outperforms standard constrained optimisation methods and achieves a performance only sightly worse than unconstrained optimisation methods. Furthermore, simulations-based methods seem to be robust with respect to non-identifiability. While we use artificial data to evaluate our method, similar experiment data are available (see~\cite{HasenauerHas2014}). This renders also this application example important for practice.

\section{Discussion and Outlook}
Optimisation problems with steady state constraints are often challenging. If an analytical expression for the parameter and input dependent steady state, $x_s(\theta,u)$ is not available, the vector of optimisation variables contains the unknown parameters as well as the corresponding steady states. Accordingly, optimisers have to evolve on the non-linear manifold defining the set of steady states. In this manuscript, we propose a continuous analogue of a gradient descent algorithm for manifold optimisation. This simulation-based method exploits the local geometry of the steady state manifold for optimisation. The local asymptotic stability of the steady state is used to render the steady state manifold attractive. We establish a result for local convergence using perturbation theory.

The proposed simulation-based optimisation method is evaluated using two models for biological processes. We find that the simulation-based optimisation method possesses improved convergence properties in comparison to standard constrained optimisation methods implemented in the MATLAB routine \texttt{fmincon}. Furthermore, the simulation-based optimisation method yields convergence properties almost comparable to those of unconstrained optimisation methods exploiting an analytical expression for the steady state. The proposed simulation-based optimisation method is however applicable to a broader class of problems as an analytical expression for the steady state is not necessary.

An open question is how the proposed method behaves in the presence of practical and structural non-identifiability. Preliminary results suggest that the method always provides a point on the non-identifiable subspace but this has to be studied in more detail. As in traditional optimisation, also using alternative update schemes, e.g. Newton-type instead of gradient descent-based parameter updates, might yield even more convincing convergence results. As shown in the proof, the retraction factor $\lambda$ has to be chosen large to ensure convergence. Howerver, a large $\lambda$ will render the simulation-based system stiff. An intelligent choice of $\lambda$ is therefore necessary. Furthermore, extension towards a combination of steady state and time series data might be interesting. The simulations-based approach can in principle be generalised to other model classes, including partial differential equations.



\end{document}